\documentclass[11pt]{article}
\usepackage{amsmath,amssymb}

\newtheorem{propo}{{\bf Proposition}}[section]
\newtheorem{coro}[propo]{{\bf Corollary}}
\newtheorem{lemma}[propo]{{\bf Lemma}} \newtheorem{theor}[propo]{{\bf
Theorem}} \newtheorem{ex}{{\sc Example}}[section]

\newenvironment{proof}{{\bf Proof.}}{$\Box$}

\def\C{{\mathbb C}}
\def\N{{\mathbb N}}
\begin{document}
\vspace*{1.0in}

\begin{center} ABELIAN SUBALGEBRAS AND IDEALS OF MAXIMAL DIMENSION IN ZINBIEL ALGEBRAS

\end{center}
\bigskip

\centerline {Manuel Ceballos } \centerline {Dpto. de Ingenier\'{\i}a, Universidad Loyola Andaluc\'{\i}a} \centerline
{Av. de las Universidades, s/n, 41704 Dos Hermanas, Sevilla, Spain.}
\centerline {and}

\centerline {David A. Towers} \centerline {Department of
Mathematics, Lancaster University} \centerline {Lancaster LA1 4YF,
England}
\bigskip

\begin{abstract}
In this paper, we compare the abelian subalgebras and ideals of maximal dimension for finite-dimensional Zinbiel algebras. We study Zinbiel algebras containing maximal abelian subalgebras of codimension $1$ and supersolvable Zinbiel algebras in which such subalgebras have codimension $2$, and we also analyze the case of filiform Zinbiel algebras. We give examples to clarify some results, including listing the values for $\alpha$ and $\beta$ for the low dimensional Zinbiel algebras over the complex field that have been classified.

\end{abstract}

\noindent {\it Mathematics Subject Classification 2020:} 17A32, 17B05, 17B20, 17B30, 17B50. \\
\noindent {\it Key Words and Phrases:} Zinbiel algebra, abelian subalgebra, abelian ideal, solvable, supersolvable, nilpotent.

\section{Introduction}
\medskip

Zinbiel algebras were introduced by J.-L. Loday \cite{Loday95} in 1995. They are the Koszul dual of Leibniz algebras and J.M. Lemaire (see \cite{Lod01}) proposed the name of Zinbiel for being obtained by writing Leibniz backwards. Leibniz algebras were defined by Loday in 1993 (see \cite{Loday93}). They are a particular case of non-associative algebras and a non-anticommutative generalization of Lie algebras. In fact, they inherit an important property of Lie algebras: the right-multiplication operator is a derivation. Many well-known results on Lie algebras can be extended to Leibniz algebras. In some papers, like \cite{LP,O2005}, the authors study the cohomological and structural properties of Leibniz algebras. Ginzburg and Kapranov introduced and analysed the concept of Koszul dual operads \cite{GK}. Starting from this concept, it was proved in \cite{Loday95} that the dual of the category of Leibniz algebras is defined by the category determined by the so-called Zinbiel identity:
\[  [[x,y],z]=[x,[y,z]]+[x,[z,y]]
\]
Some properties of Zinbiel algebras were studied in \cite{AOK,D,DT}. More concretely, in \cite{DT} the authors proved that every finite-dimensional Zinbiel algebra over an algebraically closed field is solvable and it is nilpotent over the complex number field. Filiform Zinbiel algebras were described and classified in \cite{AOK,CKKK,CCGO}. The classification of complex Zinbiel algebras up to dimension $4$ was obtained in \cite{DT} and \cite{O2002}. Finally, a partial classification of the $5$-dimensional case was done in \cite{ACK}.

We shall call a Zinbiel algebra $Z$ {\em supersolvable} if there is a chain $0=Z_0 \subset Z_1 \subset \ldots \subset Z_{n-1} \subset Z_n=Z$, where $Z_i$ is an $i$-dimensional ideal of $Z$.  We define the following series:
\[ Z^1=Z,Z^{k+1}=[Z,Z^k] \hbox{ and } Z^{(1)}=Z,Z^{(k+1)}=[Z^{(k)},Z^{(k)}] \hbox{ for all } k=2,3, \ldots
\]
We will say that an element of a Zinbiel algebra is {\em left normed} if it is of the form $[a_1,[a_2,[\ldots [a_{n-1},a_n]\ldots]]]$. Then we have the following lemma.
\begin{lemma}\label{nilp} Every element of a Zinbiel algebra that is the product of $n$ elements can be expressed as a linear combination of the $n$ elements with each term being left normed.
\end{lemma}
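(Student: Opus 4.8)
The plan is to proceed by induction on $n$, the number of factors in the product. The base cases $n=1$ and $n=2$ are immediate, since a product of one or two elements is already left normed. For the inductive step I would take a product of $n\geq 3$ elements; by definition of a product it has the form $[u,v]$ where $u$ is a product of $k$ of the elements and $v$ is a product of the remaining $n-k$, for some $1\leq k\leq n-1$. Since $k<n$ and $n-k<n$, the inductive hypothesis applies to both $u$ and $v$, and using the bilinearity of the Zinbiel multiplication (writing $u=\sum_i\lambda_i u_i$ and $v=\sum_j\mu_j v_j$ with the $u_i,v_j$ left normed, so that $[u,v]=\sum_{i,j}\lambda_i\mu_j[u_i,v_j]$) I would reduce to the case where $u$ and $v$ are themselves left normed, say $u=[a_1,[a_2,[\ldots [a_{k-1},a_k]\ldots]]]$ and $v$ a left normed product of $n-k$ elements.

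The key step is then to lower the nesting depth on the left using the Zinbiel identity. If $k=1$, then $[u,v]=[a_1,v]$ is already left normed and there is nothing to do. If $k\geq 2$, write $u=[a_1,u']$ with $u'=[a_2,[\ldots [a_{k-1},a_k]\ldots]]$ left normed of length $k-1\geq 1$, and apply the identity with $x=a_1$, $y=u'$, $z=v$ to get
\[
[u,v]=[[a_1,u'],v]=[a_1,[u',v]]+[a_1,[v,u']].
\]
Now each of $[u',v]$ and $[v,u']$ is a product of $(k-1)+(n-k)=n-1$ of the original elements, so the inductive hypothesis rewrites each as a linear combination of left normed products of length $n-1$; and prepending $[a_1,\,\cdot\,]$ to a left normed product of length $n-1$ yields a left normed product of length $n$. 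Hence $[u,v]$ is a linear combination of left normed products of length $n$ (with the original factors possibly permuted), which closes the induction.

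I do not expect a genuine obstacle here: the argument is a structural induction, and the only things to be careful about are (i) that after one use of the Zinbiel identity the two new terms genuinely involve strictly fewer factors, so that the inductive hypothesis is available, and (ii) that left-normedness is stable under multiplication on the left by a single factor. A minor point worth noting is that a product of $n$ elements need not decompose uniquely as $[u,v]$; one simply fixes any such decomposition. This lemma is presumably the combinatorial input needed later to handle the series $Z^k$ and nilpotency, so the emphasis will be on presenting the reduction cleanly rather than on any delicate estimate.
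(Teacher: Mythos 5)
Your proposal is correct and follows essentially the same route as the paper's proof: strong induction on $n$, writing the product as $[u,v]$, reducing the left factor to a left normed element $[a_1,u']$, applying the Zinbiel identity to get $[a_1,[u',v]]+[a_1,[v,u']]$, and invoking the inductive hypothesis on these $(n-1)$-factor products. Your explicit treatment of the base cases and of the $k=1$ case is a minor tidying of the same argument, not a different method.
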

\begin{proof} We use induction on $n$. If $n=3$ the Zinbiel identity gives that $[[a,b],c]=[a,[b,c]]+[a,[c,b]]$. So assume the result holds for products with less than $n$ elements (where $n>3$). Any product with $n$ elements is of the form $[r,s]$ where $r$ contains $i<n$ elements and $s$ contains $j=n-i$ elements. By the inductive hypothesis, $r$ is a linear combination of left normed elements, each of which has the form $[a,t]$, where $a$ is a single element and $t$ is left normed with $j-1$ elements. Then $[r,s]=[[a,t],s]=[a,[t,s]]+[a,[s,t]]$. But $[t,s]$ and $[s,t]$ each have $n-1$ elements, and so, by the inductive hypothesis, can be written as a linear combination of left normed elements. This completes the induction step.
\end{proof}
\medskip

Then we define $Z$ to be {\em nilpotent} (resp. {\em solvable}) if $Z^n=0$ (resp. $ Z^{(n)}=0$) for some $n \in \N$. It follows from Lemma \ref{nilp} that, in a nilpotent Zinbiel algebra, every product of $n$ elements is zero. The {\em nilradical}, $N(Z)$, (resp. {\em radical}, $R(Z)$) is the largest nilpotent (resp. solvable) ideal of $Z$. We will denote the {\it centre} of $Z$ by $Cen(Z)=\{x \in Z: [x,y]=[y,x]=0, \, \forall \, y \in Z\}$.
\par
A nilpotent Leibniz algebra $Z$ of dimension $n$ is said to be $p$-{\em filiform} if
${\rm dim}(Z^i)=n-p-i+1$, for $2 \leq i \leq n-p+1$. In case that $p=0$ or $p=1$, $Z$ is called null-filiform or filiform, respectively. We say that $Z$ is $k$-{\em abelian} if $k$ is the smallest positive integer such that $Z^k$ is abelian.

Any assumptions on the field $F$ will be specified in each result. We consider the following invariants of $Z$:
$$\alpha(Z) = \max \{\, \dim (A) \, | \, A \,\, {\rm is \,\, an \,\, abelian \,\, subalgebra \,\,
of \,\, } Z\},$$ $$\beta(Z)  = \max \{\,
\dim (B) \, | \, B \,\, {\rm is \,\, an \,\,
abelian \,\, ideal \,\,of \,\,}Z\}.$$
Both invariants are important for many reasons. For example, they are
very useful for the study of contractions
and degenerations; in particular, an algebra $Z_1$ does not degenerate into $Z_2$ if $\dim \alpha (Z_1)> \dim \alpha(Z_2)$ (see \cite[Corollary (6)]{R}).
 There is a large literature, in particular, for low-dimensional Lie algebras, see \cite{GRH,BU10,NPO,SEE,GOR}.

The authors of this paper have already studied these invariants for Lie and Leibniz algebras in
\cite{bc,CT,Tow,CT2}. More concretely, in \cite{bc} it was shown that for a solvable Lie algebra $L$ over an algebraically closed field of characteristic zero, $\alpha(L)=\beta(L)$ and the cases of abelian subalgebras of codimension $1$ and $2$ were also studied. In \cite{CT}, the authors proved that $n$-dimensional supersolvable Lie algebras $L$ with $\alpha(L)=n-2$ have $\beta(L)=n-2$. They also proved the same for nilpotent Lie algebras over a field of characteristic different from two having abelian subalgebras of codimension $3$. This result was generalised for supersolvable Lie algebras in \cite{Tow} and the same was proved for nilpotent Lie algebras containing abelian subalgebras of codimension $4$. In \cite{CT2}, the authors extend their study of the $\alpha$ and $\beta$ invariants to Leibniz algebras. In particular, they proved that $n$-dimensional solvable Leibniz algebras $L$ over a field of characteristic different from two with $\alpha(L)=n-1$ also satisfy $\beta(L)=n-1$. They showed that the same was true for supersolvable Leibniz algebras over any field. This equality of $\alpha$ and $\beta$ was generalised for nilpotent Leibniz algebras over a field of characteristic different from two. They also studied solvable and supersolvable Leibniz algebras containing abelian subalgebras of codimension two. Finally, they proved that there is a unique abelian ideal of maximal dimension for $k$-abelian $p$-filiform Leibniz algebras.

However, we have not found a similar study in the literature for Zinbiel algebras. Therefore, studying abelian subalgebras and ideals of maximal dimension in Zinbiel algebras constitutes the main goal of this paper. The structure of this current paper is as follows. In Section $2$, we study abelian subalgebras of codimension one, proving that every such subalgebra is, in fact, an ideal. Section $3$ is devoted to analysing the case of codimension $2$. Regarding this, we prove that every $n$-dimensional supersolvable Zinbiel algebra $L$ with $\alpha(L)=n-2$ satisfies that $\beta(L)=n-2$ or $\beta(L)=n-3$ and give examples to show that both possibilities can occur.
In Section $4$, we study the case of filiform Zinbiel algebras, proving that there is a unique abelian ideal of maximal dimension. Finally, in Section $5$ we give several tables with the value of alpha and beta invariants for complex Zinbiel algebras of dimension less than or equal to five.

\section{Abelian subalgebras of codimension one}
Here we have the following general result, which is valid over any field.

\begin{theor}\label{cod1} Let $Z$ be a Zinbiel algebra and let $A$ be an abelian subalgebra of codimension one in $Z$. Then $A$ is an ideal of $Z$.
\end{theor}

\begin{proof} Let $Z=A+Fz$.
\par

Suppose first that there is an $a\in A$ such that $[z,a]\notin A$. Then $Z=A+F[z,a]$, so $z=a_1+\lambda[z,a]$ for some $a_1\in A$ and $\lambda \in F$. But now $$[z,a]=\lambda[[z,a],a]=\lambda [z,[a,a]+[a,a]]=0\in A,$$ a contradiction. Hence $[Z,A]\subseteq A$.
\par

Now suppose that there is an $a\in A$ such that $[a,z]\notin A$.  Then $Z=A+F[a,z]$, so $z=a_1+\lambda[a,z]$ for some $a_1\in A$ and $\lambda \in F$. But now $$[a,z]=\lambda[a,[a,z]]=\lambda ([[a,a],z]-[a,[z,a]])\in [A,[Z,A]]=0,$$ a contradiction. Hence $[A,Z]\subseteq A$ and $A$ is an ideal of $Z$.
\end{proof}

\begin{coro}\label{solvcodim1} Let $Z$ be an $n$-dimensional Zinbiel algebra satisfying $\alpha(Z)=n-1$. Then $\beta(Z)=n-1$.
\end{coro}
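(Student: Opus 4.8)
The plan is to deduce this immediately from Theorem \ref{cod1} together with the trivial inequality $\beta(Z) \le \alpha(Z)$. First I would unpack the hypothesis: $\alpha(Z) = n-1$ means there is an abelian subalgebra $A$ of $Z$ with $\dim A = n-1$, that is, $A$ has codimension one in $Z$. Theorem \ref{cod1} then applies verbatim and tells us that $A$ is an ideal of $Z$; being abelian as well, $A$ is an abelian ideal of dimension $n-1$, whence $\beta(Z) \ge n-1$.

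For the reverse inequality, every abelian ideal of $Z$ is in particular an abelian subalgebra of $Z$, so $\beta(Z) \le \alpha(Z) = n-1$. Combining the two bounds yields $\beta(Z) = n-1$. There is essentially no obstacle here: all the substance sits in Theorem \ref{cod1}, and the only thing worth stating explicitly is that the abelian subalgebra realising $\alpha(Z)$ has codimension exactly one, so that Theorem \ref{cod1} is applicable.
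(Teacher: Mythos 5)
Your proposal is correct and is exactly the intended deduction: the paper states this corollary without proof precisely because it follows immediately from Theorem \ref{cod1} in the way you describe, namely an abelian subalgebra of dimension $n-1$ is an ideal, giving $\beta(Z)\geq n-1$, while $\beta(Z)\leq\alpha(Z)=n-1$ is trivial. Nothing further is needed.
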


\section{Abelian subalgebras of codimension two}
First we will need the following Lemma, which is well known for Lie and Leibniz algebras. Its converse will be considered in a later paper, as it is not needed here.

\begin{lemma} Let $Z$ be a supersolvable Zinbiel algebra. Then every maximal subalgebra has codimension one in $Z$.
\end{lemma}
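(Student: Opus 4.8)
The plan is to argue by induction on $n=\dim Z$. The base case $n=1$ is immediate: the only proper subalgebra of a one-dimensional algebra is $\{0\}$, which has codimension one. (If $Z=0$ there is nothing to prove.)

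For the inductive step, fix a supersolvable chain $0=Z_0\subset Z_1\subset\cdots\subset Z_n=Z$ of ideals with $\dim Z_i=i$, and let $M$ be a maximal subalgebra of $Z$. Since $\dim Z_1=1$, exactly one of two cases occurs: either $Z_1\cap M=\{0\}$ or $Z_1\subseteq M$. In the first case, because $Z_1$ is an ideal of $Z$, the sum $M+Z_1$ is a subalgebra, and it strictly contains $M$; maximality forces $M+Z_1=Z$, and then $Z_1\cap M=\{0\}$ gives $\dim Z=\dim M+\dim Z_1=\dim M+1$, as required. In the second case, pass to the quotient $Z/Z_1$, which is again a Zinbiel algebra, and is supersolvable via the chain $Z_i/Z_1$. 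Since $Z_1\subseteq M\subsetneq Z$, the image $M/Z_1$ is a proper subalgebra of $Z/Z_1$, and by the standard correspondence between subalgebras of $Z/Z_1$ and subalgebras of $Z$ containing $Z_1$ it is maximal in $Z/Z_1$. As $\dim(Z/Z_1)<\dim Z$, the inductive hypothesis applies and shows $M/Z_1$ has codimension one in $Z/Z_1$, whence $M$ has codimension one in $Z$.

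I do not expect a substantial obstacle: the two cases are exhaustive and each is routine once one knows that quotients of Zinbiel algebras by ideals are Zinbiel algebras and that the subalgebra correspondence theorem holds for such quotients, both of which are elementary. It is worth noting explicitly that the argument uses nothing about supersolvability beyond the existence of a one-dimensional ideal $Z_1$ (part of the definition) together with the fact that the quotient inherits the same property; the Zinbiel identity itself enters only through the standard behaviour of subalgebras, ideals, sums, and quotients.
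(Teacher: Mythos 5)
Your proof is correct and follows essentially the same route as the paper: induction on dimension, splitting on whether a one-dimensional ideal is contained in the maximal subalgebra $M$, using the quotient $Z/Z_1$ in one case and $Z=M+Z_1$ in the other. The only cosmetic difference is that you take the one-dimensional ideal $Z_1$ from the supersolvable chain, whereas the paper takes an arbitrary minimal ideal $A$; your choice even simplifies the second case, since $Z_1\cap M=0$ follows at once from $\dim Z_1=1$ rather than from $M\cap A$ being an ideal of $Z$.
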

\begin{proof} We use induction on the dimension of $Z$. The result clearly holds if $\dim Z=1$. So suppose it holds for supersolvable Zinbiel algebras of dimension less than $n$ ($n>1$), and let $Z$ have dimension $n$. Let $A$ be a minimal ideal of $Z$ and let $M$ be any maximal subalgebra of $Z$. Then $Z/A$ is supersolvable, so, if $A\subseteq M$, we have that $M$ has codimension one in $Z$, by the inductive hypothesis. If $A\not \subseteq M$, then $Z=M+A$. Moreover, $M\cap A$ is an ideal of $Z$, so $M\cap A=0$ and, again, $M$ has codimension one in $Z$.
\end{proof}

\begin{theor}\label{cod2} Let $Z$ be an $n$-dimensional supersolvable Zinbiel algebra and let $A$ be an maximal abelian subalgebra of codimension two in $Z$, so $\alpha(Z)=n-2$. Then $\beta(Z) = n-2$ or $n-3$.
\end{theor}

\begin{proof} Let $M$ be a maximal subalgebra containing $A$ and let $Z =M+Fz$, where $M=A+Fy$ and $A$ is an ideal of $M$, by Theorem \ref{cod1}. Then
\begin{align} [Z,A]\subseteq M.
\end{align} For, if not, then there is an $a\in A$ such that $Z=M+F[z,a]$, so $z=m+\lambda [z,a]$ for some $m\in M$. But now $[z,a']=[m,a']+\lambda [[z,a],a']\in A$ for all $a'\in A$, since $[[z,a],a']=[z,[a,a']+[a',a]]=0$, a contradiction.
\par

Similarly, we have
\begin{align}[A,Z]\subseteq M.
\end{align} For, otherwise, there is an $a\in A$ such that $Z=M+F[a,z]$, so $z=m+\lambda [a,z]$ for some $m\in M$. But now $[a',z]=[a',m]+\lambda [a',[a,z]]=-\lambda[a',[z,a]]\in [A,[Z,A]]\subseteq [A,M]\subseteq A$ for all $a'\in A$, a contradiction.
\par

Suppose that $A$ is not an ideal of $Z$, so that $Z^{(1)}\not \subseteq A$. Then there is a $k\geq 1$ such that $Z^{(k)}\not \subseteq A$, but $Z^{(k+1)}\subseteq A$. We claim that we may assume that
\begin{align} M \hbox{ is an ideal of } Z.
\end{align} If $Z=A+L^{(k)}$ then $Z^{(1)}\subseteq M+Z^{(k+1)}=M$ and the claim is proved. If not, then $A\subset A+Z^{(k)}\subset Z$, so $A+Z^{(k)}$ is a maximal subalgebra of $Z$. Put $M=A+Z^{(k)}$. Now there exists $r\geq 1$ such that $M=A+Z^{(r)}$ but $M\neq A+Z^{(r-1)}$, so $Z= A+Z^{(r-1)}$. Then $Z^{(1)}\subseteq M+Z^{(r)}=M$ and $M$ is an ideal of $Z$.
\par

Suppose that $[z,A] \subseteq A$. Then $[A,z]\not \subseteq A$, so there exists an $a\in A$ such that $[a,z]\notin A$ and $M=A+F[a,z]$. Then, for all $a'\in A$, $[a',[a,z]]=-[a',[z,a]]=0$, since $[z,a]\in A$. Hence $[A,M]=0$. Also, $[[a,z],a']=[a,[z,a']+[a',z]]=0$, since $[z,a']\in A$ and $[a',z]\in M$, so $[M,A]=0$. Moreover,
$[[a,z],[a,z]]=[a,[z,[a,z]]+[z,[z,a]]]\in [A,M]=0$, so $M$ is abelian, contradicting the maximality of $A$. Hence
\begin{align} [z,A]\not \subseteq A.
\end{align}
So, put $y=[z,a]$. Then $[y,a']=0$ for all $a'\in A$, $y^2=[[z,a],y]=[z,[a,y]+[y,a]]=[[z,y],a]\in [M,A]=0$, and $[y,z]=[[z,a_1],z]=[z^2,a_1]\in [M,A]=0$ whence
\begin{align} [M,A]=0,  y^2=0 \hbox{ and } [y,z]=0.
\end{align}
If there is an $a'\in A$ such that $[a',z]+[z,a'] \notin A$, then $[a'',[a',z]+[z,a']]=[[a'',a'],z]=0$ for all $a''\in A$. But now $[A,M]=0$ and $M$ is abelian, yielding a contradiction again. Hence
\begin{align}[a',z]+[z,a']\in A \hbox{ for all } a'\in A.
\end{align}
Let $A$ be spanned by $a_1, \ldots, a_n$ where $a_1=a$. Then
\begin{align}
[z,a_j]&=\sum_{i=1}^n \lambda_{ji} a_i + \lambda_j y& \hbox{for } j\geq 2 \\
[a_j,z]&=\sum_{i=1}^n \mu_{ji} a_i - \lambda_j y& \hbox{for } j\geq 1.
\end{align}
Putting $b_j=a_j-\lambda_j a_1$ we have
\begin{align}
[z,b_j]&=\lambda_{j1}a_1+\sum_{i=2}^n \lambda_{ji} b_i  \\
[b_j,z]&=\mu_{j1}a_1+\sum_{i=1}^n \mu_{ji} b_i
\end{align} for $j>1$.
 But now $[[z,b_j],z]=[z,[b_j,z]+[z,b_j]]=[z^2,b_j]=0$ for $j\geq 1$, since $z^2\in M$ and using (5). But $[[z,b_j],z]-\lambda_{j1}[a_1,z]\in A$, so $\lambda_{j1}=0$, since $[a_1,z]\notin A$. Also, $[z,[b_j,z]]=\mu_{j1}[z,a_1]+a'$, where $a'\in A$, for $j>1$. But $[z,[b_j,z]]=[[z,b_j],z]-[z,[z,b_j]]\in A$, so $\mu_{j1}=0$ for $j>1$.
\par

Put $B=Fb_2+\dots + Fb_n$. Then $A=Fa_1+B$ and we have shown that $[z,B]+[B,z]\subseteq B$. Now $[y,a_j-\lambda_ja_1]=0$, by (5), so $[y,b_j]=0$ for $j\geq2$. Let $[b_j,y]=\alpha_1 a_1 +\sum_{i=2}^n\alpha_{ji} b_i$. Then $[z,[b_j,y]]=[[z,b_j],y]-[z,[y,b_j]]\in A$ and $[z,[b_j,y]]=\alpha_1[z,a_1]+b$ for some $b\in B$. It follows that $\alpha_1=0$ and $B$ is an abelian ideal of $Z$ of dimension $n-3$.
\end{proof}
\medskip

Note that both possibilities in Theorem \ref{cod2} can occur. The examples in section $5$ show that the first possibility can occur, and the following example shows that the second is also possible.
\begin{ex} Let $Z$ be the six-dimensional Zinbiel algebra with basis $$e_1,e_2,e_3,e_4,e_5.e_6$$ over the complex field $\mathbb{C}$, and non-zero products
\begin{align*} e_1^2&=e_5-e_6,\\
[e_1,e_2]&=e_4+e_5-e_6=[e_3,e_2], \\
[e_1,e_3]&=e_2=[e_1,e_5]=[e_1,e_6]= - [e_3,e_1],\\
[e_5,e_2]&=e_4=[e_6,e_2],\\
[e_5,e_1]&= - e_2+2e_4= [e_6,e_1].
\end{align*}
This is a nilpotent algebra, as $Z^2 = \mathbb{C}e_2+\mathbb{C}e_4+\mathbb{C}(e_5-e_6)$, $Z^3=\mathbb{C}e_4+\mathbb{C}(e_5-e_6)=Cent(Z)$, $Z^4=0$. Moreover, $\mathbb{C}e_3+\mathbb{C}e_4+\mathbb{C}e_5+\mathbb{C}e_6$ is an abelian subalgebra of $Z$ of dimension $4$. We prove that there is no abelian ideal of $Z$ with dimension $4$.

Let us denote by $A$ an abelian ideal of $Z$ of maximal dimension. Then $Cent(Z)\subseteq A$. Consider $x=\sum_{i=1}^6 \lambda_i e_i \in A\setminus Cent(Z)$ ($\lambda_i\in \mathbb{C}$). We can assume that $\lambda_4=0$. Then $[x,e_5] =\lambda_1 e_2$, so either $e_2\in A$ or $\lambda_1=0$. Now, $x^2=\lambda_2\lambda_3(e_4+e_5-e_6)+\lambda_2\lambda_5e_4+\lambda_2\lambda_6e_4$, so $\lambda_2=0$ or $\lambda_3=0$ and $\lambda_5+\lambda_6=0$. It follows that, if $e_2\in A$ then $A=Z^2$.
\par

Suppose that $e_2\notin A$, so $\lambda_1=\lambda_2=0$. Then $[x,e_1]=-\lambda_3e_2+(\lambda_5+\lambda_6)(-e_2+2e_4)$, whence $\lambda_3+\lambda_5+\lambda_6=0$ and $x=(-\lambda_5-\lambda_6)e_3+\lambda_5e_5+\lambda_6e_6=\lambda_5(e_5-e_3)+\lambda_6(e_6-e_3)$. Hence, two further possibilities for $A$ are $Cent(Z) +\mathbb{C}(e_5-e_3)$ and $Cent(Z)+\mathbb{C}(e_6-e_3)$. We cannot extend either any further since $e_5-e_3$, $e_6-e_3$ and $e_5-e_6$ are linearly dependent.

\end{ex}

\section{Abelian subalgebras in filiform Zinbiel algebras}

Throughout this section, we will use the following notation for combinatorial numbers:
\[
C_{i+j-1}^{j}=\binom{i+j-1}{j}=\frac{(i+j-1)!}{j! \, (i-1)!}
\]

\begin{lemma}\label{lemmafiliform}

Let $Z$ be an $n$-dimensional $p$-filiform Zinbiel algebra. Then there exists a basis $e_1,\ldots,e_n$ of $Z$ such that $[e_1,e_i]=e_{i+1}$, $\forall p+1 \leq i \leq n-p-1$.

\end{lemma}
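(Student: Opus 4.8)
The plan is to build the basis directly from the lower central series, exploiting that $p$-filiformity forces each quotient $Z^{i}/Z^{i+1}$ to be $1$-dimensional for $2\le i\le n-p$, with $Z^{n-p+1}=0$, so that left multiplication by a sufficiently generic element of $Z$ acts as a ``shift'' down this flag. If $n<2p+2$ the index range $p+1\le i\le n-p-1$ is empty and any basis of $Z$ works, so I would assume $n\ge 2p+2$.

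\textbf{Step 1 (a generic generator).} For $1\le i\le n-p-1$ the set $U_i:=\{x\in Z:[x,Z^i]\subseteq Z^{i+2}\}$ is a \emph{proper} subspace of $Z$: it is a subspace because the product is bilinear, and it is proper because $[Z,Z^i]=Z^{i+1}\supsetneq Z^{i+2}$ by $p$-filiformity. Moreover, for $2\le i\le n-p-1$ and $e_1\notin U_i$, the map $x\mapsto[e_1,x]$ induces an isomorphism of the $1$-dimensional spaces $Z^i/Z^{i+1}$ and $Z^{i+1}/Z^{i+2}$, and in particular sends $Z^i\setminus Z^{i+1}$ into $Z^{i+1}\setminus Z^{i+2}$. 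Since over an infinite field (in particular over $\C$) a vector space is not a finite union of proper subspaces, I would fix $e_1\in Z\setminus\bigl(Z^2\cup U_1\cup\cdots\cup U_{n-p-1}\bigr)$.

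\textbf{Step 2 (the chain).} Because $e_1\notin U_1$ we have $[e_1,Z]\not\subseteq Z^3$, while $[e_1,Z^2]\subseteq [Z,Z^2]=Z^3$; hence $V:=\{x\in Z:[e_1,x]\in Z^3\}$ is a proper subspace containing $Z^2$, and when $p\ge 1$ (so that $\C e_1+Z^2$, which has dimension $n-p<n$, is also proper and therefore has spanning complement) the set $Z\setminus V$ is not contained in $\C e_1+Z^2$. So I would choose $e_{p+1}\in Z$ with $[e_1,e_{p+1}]\notin Z^3$ and, if $p\ge1$, also $e_{p+1}\notin \C e_1+Z^2$; for $p=0$ I would take $e_{p+1}:=e_1$ (so that $e_1^2=[e_1,e_1]\notin Z^3$). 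In either case $e_{p+1}\notin Z^2$, and $e_1,e_{p+1}$ are linearly independent modulo $Z^2$. Then I would define recursively $e_{i+1}:=[e_1,e_i]$ for $i=p+1,\dots,n-p-1$; an induction on $j$ (the base step $j=1$ using $e_{p+1}\notin Z^2$, the step $j\to j+1$ using the choice of $e_{p+1}$ when $j=1$ and the isomorphisms of Step 1 when $j\ge2$) gives $e_{p+j}\in Z^j\setminus Z^{j+1}$ for $1\le j\le n-p$, so $e_{p+2},\dots,e_{n-p}$ are linearly independent elements of $Z^2$ sitting at distinct levels of the flag.

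\textbf{Step 3 (completion).} Finally I would extend $\{e_1,e_{p+1}\}$ to a basis $e_1,\dots,e_{p+1}$ of a complement of $Z^2$ in $Z$ by adjoining $e_2,\dots,e_p$, and extend $\{e_{p+2},\dots,e_{n-p}\}$ to a basis of $Z^2$ by adjoining $e_{n-p+1},\dots,e_n$ (possible since $\dim Z^2=n-p-1$). As $(p+1)+(n-p-1)=n$, the resulting $e_1,\dots,e_n$ is a basis of $Z$, and $[e_1,e_i]=e_{i+1}$ for $p+1\le i\le n-p-1$ by construction. The only delicate point is Step 1: an arbitrary element outside $Z^2$ need not generate the entire flag under left multiplication, so one must show that a good choice exists — this is exactly the ``avoid finitely many proper subspaces'' argument, which is where infiniteness of the base field is used.
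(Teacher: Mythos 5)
Your proposal is correct, and while its skeleton is the same as the paper's (a basis adapted to the lower central series $Z\supset Z^2\supset\cdots\supset Z^{n-p+1}=0$, whose quotients $Z^i/Z^{i+1}$ are one-dimensional for $2\le i\le n-p$, together with a single element $e_1$ whose left multiplications walk down this flag), the way you obtain that element is genuinely different and is precisely where the paper is sketchy. The paper starts from an arbitrary adapted basis, finds at each level \emph{some} product $[e_i,e_j]$ landing in the next level, and then asserts that ``redefining $e_1$ as a suitable linear combination of $e_1,\ldots,e_{p+1}$'' makes one element work at every level simultaneously, without proof. You prove this up front: each $U_i=\{x\in Z:[x,Z^i]\subseteq Z^{i+2}\}$ is a proper subspace because $[Z,Z^i]=Z^{i+1}\neq Z^{i+2}$, so over an infinite field a single $e_1$ avoiding $Z^2\cup U_1\cup\cdots\cup U_{n-p-1}$ exists, and then the recursion $e_{i+1}:=[e_1,e_i]$ succeeds because $L_{e_1}$ induces isomorphisms between the one-dimensional quotients (with the $i=1$ level handled separately through $V$ and $\C e_1+Z^2$). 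What this buys is a complete argument for the step the paper glosses over, at the cost of assuming the field infinite -- harmless, since the lemma is only used over $\C$, and the paper's ``suitable linear combination'' implicitly requires the same sort of genericity. Two small points: in Step 2 the stated range should be $1\le j\le n-2p$ rather than $1\le j\le n-p$ (the recursion only produces $e_{p+1},\ldots,e_{n-p}$), which is what your next sentence in fact uses; and the parenthetical claim for $p=0$ that $[e_1,e_1]\notin Z^3$ merits its one-line justification, namely that in that case the proper subspace $V\supseteq Z^2$ must equal $Z^2$ (which has codimension one), so $e_1\notin Z^2=V$ gives the assertion.
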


\begin{proof}

Let $Z$ be a $p$-filiform Zinbiel algebra. Then we can choose a basis $e_1,\ldots,e_n$ of $Z$ such that
$$e_1, \ldots, e_{p+1} \in L \setminus L^2 \quad {\rm and} \quad e_{p+i} \in L^{i} \setminus L^{i+1}, \,\, {\rm for} \,\,2 \leq i \leq n-p$$
Since $e_{p+2} \in L^2 \setminus L^3$, $\exists e_i, e_j \in L$ such that $[e_i,e_j]=a_{ij} e_{p+2}$, where $1 \leq i,j \leq n$ and $a_{ij} \neq 0$. We can suppose, without loss of generality, that $i=1, j=p+1$ and $a_{ij}=1$. In this way, we get $[e_{1},e_{p+1}]=e_{p+2}$.
We conclude the proof by following this procedure for every vector $e_{p+i}$, for $i=3, \ldots, n-p-1$ and redefining $e_1$ as a suitable linear combination of $e_1,\ldots,e_{p+1}$.

\end{proof}

\begin{propo}\label{nullfiliform}
Let $Z$ be an $n$-dimensional complex null-filiform Zinbiel algebra. Then $\alpha(Z)=\beta(Z)=n-\lfloor \frac{n}{2} \rfloor$ and there is a unique abelian ideal of maximal dimension.

\end{propo}

\begin{proof}
Let $Z$ be a null-filiform Zinbiel algebra. According to \cite[Theorem 2.4]{AKO}, there is a basis $e_1,\ldots,e_n$ of $Z$ such that
\[
[e_i,e_j]=C_{i+j-1}^{j}\,e_{i+j}, \,\, \forall\, 2 \leq i+j \leq n
\]
This basis also verifies conditions of Lemma \ref{lemmafiliform} and, hence, $[e_1,e_i]=e_{i+1}$, $\forall 1 \leq i \leq n-1$. For this basis, $Z^{k}=span\{e_{k},\ldots,e_n\}$ for $2 \leq k \leq n$.
Let us prove that, $Z^{\lfloor \frac{n}{2} \rfloor +1}=span\{e_{\lfloor \frac{n}{2} \rfloor +1},\ldots,e_n\}$ is the unique abelian ideal of maximal dimension. First, $Z^{\lfloor \frac{n}{2} \rfloor +1}$ is clearly an abelian ideal and $Z^{\lfloor \frac{n}{2} \rfloor}$ is not since $[e_{\lfloor \frac{n}{2} \rfloor },e_{\lfloor \frac{n}{2} \rfloor }] \neq 0$. Let us assume that there is another abelian ideal $A$ which is not contained in $Z^{\lfloor \frac{n}{2} \rfloor +1}$. There is an $e=\displaystyle\sum_{i=1}^n \alpha_i e_i \in A$ but $e \notin Z^{\lfloor \frac{n}{2} \rfloor +1}$. Consequently, there exists $ j$ with $1 \leq j \leq \lfloor \frac{n}{2} \rfloor$ and $\alpha_j \neq 0$. We choose the minimal $j$ satisfying that condition. Then,
$$R_{e} (e_1)=[e_1,e]=\alpha_j e_{j+1} + \sum_{i=j+1}^{n-1} \alpha_i e_{i+1} \in A$$
In fact,
$$R^{\ell}_{e} (e_1)=\alpha_j e_{j+\ell} + \sum_{i=j+1}^{n-\ell} \alpha_i e_{i+\ell} \in A, \quad \forall \, 0 \leq \ell \leq n-j$$
It follows that $\alpha_j e_{j+\ell} \in A$, for $0 \leq \ell \leq n-j$. Since $j \leq \lfloor \frac{n}{2} \rfloor$ and $\alpha_j e_j \in A$, we conclude that $Z^{\lfloor \frac{n}{2} \rfloor} \subset A$ and, therefore, $A$ is not abelian, which is a contradiction.
\end{proof}

\begin{propo}

Let $Z$ be an $n$-dimensional complex filiform Zinbiel algebra. Then there is a unique abelian ideal of maximal dimension and either $\alpha(Z)=\beta(Z)=n-\lfloor \frac{n+1}{2} \rfloor$ or $\alpha(Z)=\beta(Z)=n-\lfloor \frac{n+1}{2}+1 \rfloor$.

\end{propo}

\begin{proof}

Let $Z$ be an $n$-dimensional complex filiform Zinbiel algebra. According to \cite{AKO}[Theorem 2.2], $Z$ is isomorphic to one of the following non-isomorphic algebras:
$$\mathcal{F}_n^1= {\rm span} (e_1, \ldots, e_n): \,\, [e_i,e_j]=C_{i+j-1}^j\, e_{i+j}, \,\, 2 \leq i+j \leq n-1$$
$$\mathcal{F}_n^2= {\rm span} (e_1, \ldots, e_n): \,\, [e_i,e_j]=C_{i+j-1}^j\, e_{i+j}, \,\, 2 \leq i+j \leq n-1; \,\, [e_n,e_1]=e_{n-1}$$
$$\mathcal{F}_n^3= {\rm span} (e_1, \ldots, e_n): \,\, [e_i,e_j]=C_{i+j-1}^j\, e_{i+j}, \,\, 2 \leq i+j \leq n-1; \,\, [e_n,e_n]=e_{n-1}$$
It can be proved in a similar way as in Proposition \ref{nullfiliform} that $A=span\{e_{\lfloor \frac{n+1}{2} \rfloor},\ldots,e_n\}$ is the unique abelian ideal of maximal dimension for $\mathcal{F}_n^1$ and $\mathcal{F}_n^2$. The same happens for $B=span\{e_{\lfloor \frac{n+1}{2} \rfloor},\ldots,e_{n-1}\}$ and Zinbiel algebras $\mathcal{F}_n^3$.
\end{proof}

\section{Tables of $\alpha$ and $\beta$ for Zinbiel algebras}

In Table \ref{alphaybetazinbieldimlessthanfive}, we have computed the value of alpha and beta invariants for complex non-trivial and non-split Zinbiel algebras of dimension less than five. The classification of those algebras is obtained from the results in \cite{AOK,DT,O2002} and bearing in mind the correction done in \cite{KPPV}.
Finally, Tables \ref{alphaybetazinbieldimfive(I)}-\ref{alphaybetazinbieldimfive(VII)} contain the calculations for the $5$-dimensional case, whose classification can be found in \cite{ACK,AJK}.

\begin{table}[htp] \caption{complex non-trivial and non-split Zinbiel algebras of dimension less than five (I).}
\label{alphaybetazinbieldimlessthanfive}
\begin{center}
\begin{tabular}{|c|c|c|c|}
\hline
$Z$ & Products & $\alpha(Z)$ & $\beta(Z)$ \\
\hline
$Z_2^1$ & $[e_1,e_1]=e_2$ & 1 & 1 \\
\hline
$Z_3^1$ & $[e_1,e_1]=e_2, [e_1,e_2]=\frac{1}{2}e_3, [e_2,e_1]=e_3$ & 2 & 2 \\
\hline
$Z_3^2$ & $[e_1,e_2]=e_3, [e_2,e_1]=-e_3$ & 2 & 2 \\
\hline
$Z_3^3$ & \begin{tabular}{c} $[e_1,e_1]=e_3, [e_1,e_2]=e_3$, \\  $[e_2,e_2]=\alpha e_3$, $\alpha \in \C$ \end{tabular} & \begin{tabular}{c} 2 ($\alpha =0$) \\  1 ($\alpha \neq0$)  \end{tabular} & \begin{tabular}{c} 2 ($\alpha =0$) \\  1 ($\alpha \neq0$)  \end{tabular} \\
\hline
$Z_3^4$ & $[e_1,e_1]=e_3, [e_1,e_2]=e_3, [e_2,e_1]=e_3$ & 2 & 2 \\
\hline
$Z_4^1$ & \begin{tabular}{c} $[e_1,e_1]=e_2, [e_1,e_2]=e_3, [e_2,e_1]=2e_3$, \\
$[e_1,e_3]=e_4, [e_2,e_2]=3e_4, [e_3,e_1]=3e_4$ \end{tabular}
 & 2 & 2 \\
\hline
$Z_4^2$ & \begin{tabular}{c} $[e_1,e_1]=e_3, [e_1,e_2]=e_4$, \\
$[e_1,e_3]=e_4, [e_3,e_1]=2e_4$ \end{tabular}
 & 3 & 3 \\
\hline
$Z_4^3$ & \begin{tabular}{c} $[e_1,e_1]=e_3, [e_1,e_3]=e_4$, \\
$[e_2,e_2]=e_4, [e_3,e_1]=2e_4$ \end{tabular}
 & 2 & 2 \\
\hline
$Z_4^4$ &  $[e_1,e_2]=e_3, [e_1,e_3]=e_4$, $[e_2,e_1]=-e_3$
 & 3 & 3 \\
\hline
$Z_4^5$ & \begin{tabular}{c} $[e_1,e_2]=e_3, [e_1,e_3]=e_4$, \\
$[e_2,e_1]=-e_3, [e_2,e_2]=e_4$ \end{tabular}
 & 2 & 2 \\
\hline
$Z_4^6$ & \begin{tabular}{c} $[e_1,e_1]=e_4, [e_1,e_2]=e_3$, \\
$[e_2,e_1]=-e_3, [e_2,e_2]=-2e_3+e_4$ \end{tabular}
 & 2 & 2 \\
\hline
$Z_4^7$ &  $[e_1,e_2]=e_3, [e_2,e_1]=e_4$, $[e_2,e_2]=-e_3$
 & 3 & 3 \\
\hline
$Z_4^{8}(\alpha)$ & \begin{tabular}{c} $[e_1,e_1]=e_3, [e_1,e_2]=e_4$, \\
$[e_2,e_1]=-\alpha e_3, [e_2,e_2]=-e_4$ \end{tabular}
 & \begin{tabular}{c} 3 ($\alpha =1$) \\  2 ($\alpha \neq 1$)   \end{tabular} & \begin{tabular}{c} 3 ($\alpha =1$) \\  2 ($\alpha \neq 1$)   \end{tabular} \\
\hline
$Z_4^{9}(\alpha)$ & \begin{tabular}{c} $[e_1,e_1]=e_4, [e_1,e_2]=\alpha e_4$, \\
$[e_2,e_1]=-\alpha e_4, [e_2,e_2]=e_4, [e_3,e_3]=e_4$ \end{tabular}
 & 1 & 1 \\
\hline
$Z_4^{10}$ & \begin{tabular}{c} $[e_1,e_2]=e_4, [e_1,e_3]=e_4$, \\
$[e_2,e_1]=-e_4, [e_2,e_2]=e_4, [e_3,e_1]=e_4$ \end{tabular}
 & 2 & 2 \\
\hline
$Z_4^{11}$ & \begin{tabular}{c} $[e_1,e_1]=e_4, [e_1,e_2]=e_4$, \\
$[e_2,e_1]=-e_4, [e_3,e_3]=e_4$ \end{tabular}
 & 2 & 2 \\
\hline
$Z_4^{12}$ & $[e_1,e_2]=e_3, [e_2,e_1]=e_4$
 & 3 & 3 \\
\hline
$Z_4^{13}$ & $[e_1,e_2]=e_3, [e_2,e_1]=-e_3, [e_2,e_2]=e_4$
 & 3 & 3 \\
\hline
$Z_4^{14}$ & $[e_2,e_1]=e_4, [e_3,e_1]=e_4$
 & 3 & 3 \\
\hline
$Z_4^{15}(\alpha)$ & \begin{tabular}{c} $[e_1,e_2]=e_4, [e_2,e_2]= e_3$, \\
$[e_2,e_1]=\frac{1+\alpha}{1-\alpha} e_4$, $\alpha \neq 1$ \end{tabular}
 & 3 & 3 \\
\hline
$Z_4^{16}$ & $[e_1,e_2]=e_4, [e_2,e_1]=-e_4, [e_3,e_3]=e_4$
 & 2 & 2 \\
\hline
\end{tabular}
\end{center}
\end{table}

\begin{table}[htp] \caption{$5$-dimensional  non-trivial and non-split complex Zinbiel algebra with $2$-dimensional annihilator (I).}
\label{alphaybetazinbieldimfive(I)}
\begin{center}
\begin{tabular}{|c|c|c|c|}
\hline
$Z$ & Products & $\alpha(Z)$ & $\beta(Z)$ \\
\hline
$Z_5^{1}$ & \begin{tabular}{c} $[e_1,e_1]=e_2, [e_1,e_2]=e_4$, \\ $[e_2,e_1]=2e_4$, $[e_3,e_3]=e_4$ \end{tabular} & 3 & 3 \\
\hline
$Z_5^{2}$ & \begin{tabular}{c} $[e_1,e_1]=e_2, [e_1,e_2]=e_4$, \\ $[e_1,e_3]=e_4$, $[e_2,e_1]=2e_4$ \end{tabular} & 3 & 3 \\
\hline
$Z_5^{3}$ & \begin{tabular}{c} $[e_1,e_1]=e_2, [e_1,e_2]=e_4$, \\ $[e_1,e_3]=e_5$, $[e_2,e_1]=2e_4$ \end{tabular} & 4 & 4 \\
\hline
$Z_5^{4}(\alpha)$ & \begin{tabular}{c} $[e_1,e_1]=e_2, [e_1,e_2]=e_4$, \\ $[e_1,e_3]=\alpha e_5$, $[e_2,e_1]=2e_4$,$[e_3,e_1]=e_5$ \end{tabular} & 4 & 4 \\
\hline
$Z_5^{5}$ & \begin{tabular}{c} $[e_1,e_1]=e_2, [e_1,e_2]=e_4$, \\ $[e_1,e_3]=e_5$, $[e_2,e_1]=2e_4$,$[e_3,e_3]=e_5$ \end{tabular} & 3 & 3 \\
\hline
$Z_5^{6}$ & \begin{tabular}{c} $[e_1,e_1]=e_2, [e_1,e_2]=e_4$, \\ $[e_3,e_3]=e_5$, $[e_2,e_1]=2e_4$ \end{tabular} & 3 & 3 \\
\hline
$Z_5^{7}$ & \begin{tabular}{c} $[e_1,e_1]=e_2, [e_1,e_2]=e_4$, \\ $[e_1,e_3]=e_4$, $[e_2,e_1]=2e_4$,$[e_3,e_3]=e_5$ \end{tabular} & 3 & 3 \\
\hline
$Z_5^{8}$ & \begin{tabular}{c} $[e_1,e_1]=e_2, [e_1,e_2]=e_4$, \\ $[e_1,e_3]=e_4+e_5$, $[e_2,e_1]=2e_4$,$[e_3,e_3]=e_5$ \end{tabular} & 3 & 3 \\
\hline
$Z_5^{9}$ & \begin{tabular}{c} $[e_1,e_1]=e_2, [e_1,e_2]=e_4$, \\ $[e_1,e_3]=e_5$, $[e_2,e_1]=2e_4$,$[e_3,e_1]=e_4+2e_5$ \end{tabular} & 4 & 4 \\
\hline
$Z_5^{10}$ & \begin{tabular}{c} $[e_1,e_1]=e_2, [e_1,e_2]=e_4$, \\ $[e_1,e_3]=e_5$, $[e_2,e_1]=2e_4$,$[e_3,e_3]=e_4$ \end{tabular} & 3 & 3 \\
\hline
$Z_5^{11}(\alpha)$ & \begin{tabular}{c} $[e_1,e_1]=e_2, [e_1,e_2]=e_4$,$[e_1,e_3]=\alpha e_5$, \\  $[e_2,e_1]=2e_4$,$[e_3,e_1]=e_5$, $[e_3,e_3]=e_4$ \end{tabular} & 3 & 3 \\
\hline
$Z_5^{12}$ & \begin{tabular}{c} $[e_1,e_2]=e_3, [e_1,e_3]=e_4$, \\ $[e_2,e_1]=-e_3$, $[e_2,e_2]=e_4$ \end{tabular} & 3 & 3 \\
\hline
$Z_5^{13}$ & $[e_1,e_2]=e_3, [e_1,e_3]=e_4$, $[e_2,e_1]=-e_3$ & 4 & 4 \\
\hline
$Z_5^{14}$ & \begin{tabular}{c} $[e_1,e_1]=e_4, [e_1,e_2]=e_3$, \\ $[e_1,e_3]=e_5$, $[e_2,e_1]=-e_3$ \end{tabular} & 4 & 4 \\
\hline
$Z_5^{15}$ & $[e_1,e_2]=e_3+e_4, [e_1,e_3]=e_5$, $[e_2,e_1]=-e_3$ & 4 & 4 \\
\hline
$Z_5^{16}$ & \begin{tabular}{c} $[e_1,e_2]=e_3, [e_2,e_1]=-e_3$, \\ $[e_1,e_3]=e_5$, $[e_2,e_2]=e_4$ \end{tabular} & 3 & 3 \\
\hline
$Z_5^{17}$ & \begin{tabular}{c} $[e_1,e_1]=e_4, [e_1,e_2]=e_3, [e_2,e_1]=-e_3$, \\ $[e_1,e_3]=e_5$, $[e_2,e_2]=e_4$ \end{tabular} & 3 & 3 \\
\hline
$Z_5^{18}$ & \begin{tabular}{c} $[e_1,e_2]=e_3, [e_2,e_1]=-e_3$, \\ $[e_1,e_3]=e_5$, $[e_2,e_3]=e_4$ \end{tabular} & 3 & 3 \\
\hline
\end{tabular}
\end{center}
\end{table}

\begin{table}[htp] \caption{$5$-dimensional non-trivial and non-split complex  Zinbiel algebra with $2$-dimensional annihilator (II).}
\label{alphaybetazinbieldimfive(II)}
\begin{center}
\begin{tabular}{|c|c|c|c|}
\hline
$Z$ & Products & $\alpha(Z)$ & $\beta(Z)$ \\
\hline
$Z_5^{19}$ & \begin{tabular}{c} $[e_1,e_1]=e_4, [e_1,e_2]=e_3, [e_2,e_1]=-e_3$, \\ $[e_1,e_3]=e_5$, $[e_2,e_3]=e_4$ \end{tabular} & 3 & 3 \\
\hline
$Z_5^{20}$ & \begin{tabular}{c} $[e_1,e_2]=e_3+e_4, [e_1,e_3]=e_5$, \\ $[e_2,e_1]=-e_3$,  $[e_2,e_3]=e_4$ \end{tabular} & 3 & 3 \\
\hline
$Z_5^{21}$ & \begin{tabular}{c} $[e_1,e_1]=e_4, [e_1,e_2]=e_3, [e_2,e_1]=-e_3$, \\ $[e_1,e_3]=e_5, [e_2,e_2]=e_4$, $[e_2,e_3]=e_4$ \end{tabular} & 3 & 3 \\
\hline
$Z_5^{22}$ & \begin{tabular}{c} $[e_1,e_1]=e_4, [e_1,e_2]=e_3, [e_2,e_1]=-e_3$, \\ $[e_1,e_3]=e_5$, $[e_2,e_2]=e_5$ \end{tabular} & 3 & 3 \\
\hline
$Z_5^{23}$ & \begin{tabular}{c} $[e_1,e_2]=e_3+e_4, [e_1,e_3]=e_5$, \\ $[e_2,e_1]=-e_3$, $[e_2,e_2]=e_5$ \end{tabular} & 3 & 3 \\
\hline
\end{tabular}
\end{center}
\end{table}

\begin{table}[htp] \caption{$5$-dimensional non-trivial and non-split complex Zinbiel algebras with $1$-dimensional annihilator (I).}
\label{alphaybetazinbieldimfive(III)}
\begin{center}
\begin{tabular}{|c|c|c|c|}
\hline
$Z$ & Products & $\alpha(Z)$ & $\beta(Z)$ \\
\hline
$Z_5^{24}$ & \begin{tabular}{c} $[e_1,e_1]=e_2, [e_1,e_2]=e_5$, \\ $[e_1,e_3]=e_5$, $[e_2,e_1]=2e_5$, $[e_4,e_4]=e_5$ \end{tabular} & 3 & 3 \\
\hline
$Z_5^{25}(\alpha)$ & \begin{tabular}{c} $[e_1,e_1]=e_2, [e_1,e_2]=e_5$, \\ $[e_2,e_1]=2e_5$, $[e_3,e_4]=e_5$, $[e_4,e_3]=\alpha e_5$ \end{tabular} & 3 & 3 \\
\hline
$Z_5^{26}$ & \begin{tabular}{c} $[e_1,e_1]=e_2, [e_1,e_2]=e_5, [e_2,e_1]=2e_5$, \\ $[e_3,e_3]=e_5$, $[e_3,e_4]=e_5$, $[e_4,e_3]=-e_5$ \end{tabular} & 3 & 3 \\
\hline
$Z_5^{27}$ & \begin{tabular}{c} $[e_1,e_1]=e_2, [e_1,e_2]=e_5$, $[e_1,e_4]=e_5$, \\ $[e_2,e_1]=2 e_5$, $[e_3,e_4]=e_5$,$[e_4,e_3]=2e_5$ \end{tabular} & 3 & 3 \\
\hline
$Z_5^{28}$ & \begin{tabular}{c} $[e_1,e_1]=e_3, [e_1,e_3]=e_5$, $[e_2,e_2]=e_4$, \\ $[e_2,e_4]=e_5$, $[e_3,e_1]=2e_5$,$[e_4,e_2]=2e_5$ \end{tabular} & 3 & 3 \\
\hline
$Z_5^{29}$ & \begin{tabular}{c} $[e_1,e_2]=e_3, [e_1,e_3]=e_5$, \\ $[e_2,e_1]=-e_3$, $[e_2,e_4]=e_5$ \end{tabular} & 3 & 3 \\
\hline
$Z_5^{30}$ & \begin{tabular}{c} $[e_1,e_2]=e_3, [e_1,e_3]=e_5$, \\ $[e_2,e_1]=-e_3$, $[e_4,e_1]=e_5$ \end{tabular} & 4 & 4 \\
\hline
$Z_5^{31}$ & \begin{tabular}{c} $[e_1,e_2]=e_3, [e_1,e_3]=e_5$, \\ $[e_2,e_1]=-e_3$, $[e_2,e_2]=e_5$,$[e_4,e_1]=e_5$ \end{tabular} & 3 & 3 \\
\hline
\end{tabular}
\end{center}
\end{table}

\begin{table}[htp] \caption{$5$-dimensional non-trivial and non-split complex Zinbiel algebras with $1$-dimensional annihilator (II).}
\label{alphaybetazinbieldimfive(IV)}
\begin{center}
\begin{tabular}{|c|c|c|c|}
\hline
$Z$ & Products & $\alpha(Z)$ & $\beta(Z)$ \\
\hline
$Z_5^{32}$ & \begin{tabular}{c} $[e_1,e_2]=e_3, [e_1,e_3]=e_5$, \\ $[e_2,e_1]=-e_3$, $[e_2,e_4]=e_5$,$[e_4,e_1]=e_5$ \end{tabular} & 3 & 3 \\
\hline
$Z_5^{33}(\alpha)$ & \begin{tabular}{c} $[e_1,e_2]=e_3, [e_1,e_4]=\alpha e_5$, \\ $[e_2,e_1]=-e_3$, $[e_2,e_3]=e_5$,$[e_4,e_1]=e_5$ \end{tabular} & 3 & 3 \\
\hline
$Z_5^{34}$ & \begin{tabular}{c} $[e_1,e_1]=e_5, [e_1,e_2]=e_3$,$[e_1,e_4]=-e_5$, \\  $[e_2,e_1]=-e_3$,$[e_2,e_3]=e_5$, $[e_4,e_1]=e_5$ \end{tabular} & 3 & 3 \\
\hline
$Z_5^{35}$ & \begin{tabular}{c} $[e_1,e_2]=e_3, [e_2,e_1]=-e_3$, \\ $[e_2,e_3]=e_5$, $[e_2,e_4]=e_5$ \end{tabular} & 4 & 4 \\
\hline
$Z_5^{36}$ &  \begin{tabular}{c} $[e_1,e_1]=e_5, [e_1,e_2]=e_3$, \\ $[e_2,e_1]=-e_3$, $[e_2,e_3]=e_5$, $[e_4,e_4]=e_5$  \end{tabular} & 3 & 3 \\ 
\hline
$Z_5^{37}(\alpha)$ & \begin{tabular}{c} $[e_1,e_1]=\alpha e_5, [e_1,e_2]=e_3$, $[e_1,e_4]=e_5$, \\ $[e_2,e_1]=-e_3$, $[e_2,e_3]=e_5$, $[e_4,e_4]=e_5$ \end{tabular} & 3 & 3 \\
\hline
$Z_5^{38}$ & $[e_1,e_2]=e_3, [e_2,e_1]=-e_3$, $[e_4,e_3]=e_5$ & 3 & 3 \\
\hline
$Z_5^{39}$ & \begin{tabular}{c} $[e_1,e_1]=e_5, [e_1,e_2]=e_3$, \\ $[e_2,e_1]=-e_3$, $[e_4,e_3]=e_5$ \end{tabular} & 3 & 3 \\
\hline
$Z_5^{40}$ & $[e_1,e_2]=e_3+e_5, [e_2,e_1]=-e_3, [e_4,e_3]=e_5$ & 3 & 3 \\
\hline
$Z_5^{41}$ & \begin{tabular}{c} $[e_1,e_2]=e_3, [e_2,e_1]=-e_3$, \\ $[e_2,e_4]=e_5$, $[e_4,e_3]=e_5$ \end{tabular} & 3 & 3 \\
\hline
$Z_5^{42}$ & \begin{tabular}{c} $[e_1,e_1]=e_5, [e_1,e_2]=e_3, [e_2,e_1]=-e_3$, \\ $[e_2,e_4]=e_5$, $[e_4,e_3]=e_5$ \end{tabular} & 3 & 3 \\
\hline
$Z_5^{43}$ & \begin{tabular}{c} $[e_1,e_2]=e_3+e_5, [e_2,e_1]=-e_3$, \\ $[e_2,e_4]=e_5$,  $[e_4,e_3]=e_5$ \end{tabular} & 3 & 3 \\
\hline
$Z_5^{44}$ & \begin{tabular}{c} $[e_1,e_2]=e_3, [e_2,e_1]=-e_3, [e_2,e_2]=e_5$, \\ $[e_2,e_4]=e_5, [e_4,e_3]=e_5$ \end{tabular} & 3  & 3 \\
\hline
$Z_5^{45}$ & \begin{tabular}{c} $[e_1,e_1]=e_5, [e_1,e_2]=e_3, [e_2,e_1]=-e_3$, \\ $[e_2,e_2]=e_5$, $[e_2,e_4]=e_5$, $[e_4,e_3]=e_5$ \end{tabular} & 3 & 3 \\
\hline
$Z_5^{46}$ & \begin{tabular}{c} $[e_1,e_2]=e_3, [e_1,e_3]=e_5$, $[e_1,e_4]=-e_5$, \\ $[e_2,e_1]=e_4$, $[e_2,e_2]=-e_3$, $[e_2,e_3]=-e_5$, \\
 $[e_2,e_4]=e_5, [e_3,e_2]=-2e_5$ \end{tabular} & 3 & 3 \\
\hline
$Z_5^{47}$ & \begin{tabular}{c} $[e_1,e_1]=e_3, [e_1,e_2]=e_4$, $[e_1,e_4]=-e_5$, \\ $[e_2,e_1]=-e_3$, $[e_2,e_2]=-e_4$, $[e_2,e_4]=e_5$, \\
 $[e_3,e_2]=-e_5, [e_4,e_1]=-e_5$, $[e_4,e_2]=2e_5$ \end{tabular} & 3 & 3 \\
\hline
\end{tabular}
\end{center}
\end{table}

\begin{table}[htp] \caption{$5$-dimensional non-trivial and non-split complex Zinbiel algebras with $1$-dimensional annihilator (III).}
\label{alphaybetazinbieldimfive(V)}
\begin{center}
\begin{tabular}{|c|c|c|c|}
\hline
$Z$ & Products & $\alpha(Z)$ & $\beta(Z)$ \\
\hline
$Z_5^{48}$ & \begin{tabular}{c} $[e_1,e_1]=e_3+e_5, [e_1,e_2]=e_4$, $[e_1,e_3]=-e_5$, \\ $[e_1,e_4]=e_5$, $[e_2,e_1]=-e_3$, $[e_2,e_2]=-e_4$, \\
 $[e_2,e_3]=e_5, [e_2,e_4]=-e_5$, $[e_3,e_1]=-2e_5$, \\ $[e_3,e_2]=2e_5$, $[e_4,e_1]=2e_5$, $[e_4,e_2]=-2e_5$ \end{tabular} & 3 & 3 \\
\hline
$Z_5^{49}$ & \begin{tabular}{c} $[e_1,e_1]=e_3, [e_1,e_2]=e_4$, $[e_1,e_3]=-e_5$, \\ $[e_1,e_4]=e_5$, $[e_2,e_1]=-e_3$, $[e_2,e_2]=-e_4$, \\
 $[e_2,e_3]=e_5, [e_2,e_4]=-e_5$, $[e_3,e_1]=-2e_5$, \\ $[e_3,e_2]=2e_5$, $[e_4,e_1]=2e_5$, $[e_4,e_2]=-2e_5$ \end{tabular} & 3 & 3 \\
\hline
$Z_5^{50}$ & \begin{tabular}{c} $[e_1,e_2]=e_3, [e_1,e_3]=-e_5$, $[e_1,e_4]=e_5$, \\ $[e_2,e_1]=e_4$, $[e_2,e_3]=-e_5$, $[e_2,e_4]=e_5$  \end{tabular} & 3 & 3 \\
\hline
$Z_5^{51}$ & \begin{tabular}{c} $[e_1,e_2]=e_3, [e_1,e_3]=-e_5$, \\ $[e_1,e_4]=e_5$, $[e_2,e_1]=e_4$, $[e_2,e_2]=e_5$ \end{tabular} & 3 & 3 \\
\hline
$Z_5^{52}$ & \begin{tabular}{c} $[e_1,e_2]=e_3, [e_1,e_3]=-e_5$, \\ $[e_1,e_4]=e_5$, $[e_2,e_1]=e_4$ \end{tabular} & 4 & 4 \\
\hline
\begin{tabular}{c} $Z_5^{53}(\alpha)$ \\  $\alpha \neq -1$ \end{tabular} & \begin{tabular}{c} $[e_1,e_2]=e_4, [e_1,e_3]=(\alpha+1)e_5$, \\ $[e_2,e_1]=\alpha e_4$,  $[e_2,e_2]=e_3$, $[e_2,e_4]=2\alpha e_5$, \\
$[e_3,e_1]=2\alpha(\alpha+1) e_5$, $[e_4,e_2]=2(\alpha+1)e_5$ \end{tabular} & 3 & 3 \\
\hline
$Z_5^{54}$ & \begin{tabular}{c} $[e_1,e_2]=e_4, [e_1,e_3]=e_5, [e_2,e_1]=e_5$, \\ $[e_2,e_2]=e_3, [e_4,e_2]=2e_5$ \end{tabular} & 3 & 3 \\
\hline
$Z_5^{55}$ & \begin{tabular}{c} $[e_1,e_1]=e_5, [e_1,e_2]=e_4$, $[e_1,e_3]=\frac{1}{2}e_5$, \\ $[e_2,e_1]=-\frac{1}{2}e_4$, $[e_2,e_2]=e_3$, $[e_2,e_4]=-e_5$, \\
 $[e_3,e_1]=-\frac{1}{2}e_5, [e_4,e_2]=e_5$ \end{tabular} & 3 & 3 \\
\hline
$Z_5^{56}$ & \begin{tabular}{c} $[e_1,e_2]=e_4, [e_1,e_3]=\frac{1}{2}e_5$, $[e_2,e_1]=-\frac{1}{2}e_4$, \\ $[e_2,e_2]=e_3$, $[e_2,e_3]=e_5$
 $[e_2,e_4]=-e_5$, \\ $[e_3,e_1]=-\frac{1}{2}e_5$, $[e_3,e_2]=2e_5$, $[e_4,e_2]=e_5$  \end{tabular} & 3 & 3 \\
\hline
$Z_5^{57}$ & \begin{tabular}{c} $[e_1,e_1]=e_5, [e_1,e_2]=e_4, [e_1,e_3]=\frac{1}{2}e_5$, \\ $[e_2,e_1]=-\frac{1}{2}e_4, [e_2,e_2]=e_3$, $[e_2,e_3]=e_5$, \\
$[e_2,e_4]=-e_5$, $[e_3,e_1]=-\frac{1}{2}e_5$, \\ $[e_3,e_2]=2e_5$, $[e_4,e_2]=e_5$ \end{tabular} & 3 & 3 \\
\hline
$Z_5^{58}$ & \begin{tabular}{c} $[e_1,e_2]=e_4, [e_1,e_4]=e_5$, $[e_2,e_1]=-e_4$, \\ $[e_2,e_2]=e_3$, $[e_2,e_3]=e_5$, $[e_3,e_2]=2e_5$ \end{tabular} & 3 & 3 \\
\hline
$Z_5^{59}$ & \begin{tabular}{c} $[e_1,e_1]=e_5, [e_1,e_2]=e_4$, $[e_2,e_1]=-e_4$, \\ $[e_2,e_2]=e_3$, $[e_2,e_3]=e_5$, \\
 $[e_2,e_4]=e_5$, $[e_3,e_2]=2e_5$  \end{tabular} & 3 & 3 \\
\hline
\end{tabular}
\end{center}
\end{table}

\begin{table}[htp] \caption{$5$-dimensional non-trivial and non-split complex Zinbiel algebras with $1$-dimensional annihilator (IV).}
\label{alphaybetazinbieldimfive(VI)}
\begin{center}
\begin{tabular}{|c|c|c|c|}
\hline
$Z$ & Products & $\alpha(Z)$ & $\beta(Z)$ \\
\hline
$Z_5^{60}$ & \begin{tabular}{c} $[e_1,e_2]=e_4, [e_2,e_1]=-e_4, [e_2,e_2]=e_3$, \\ $[e_2,e_3]=e_5, [e_2,e_4]=e_5$, $[e_3,e_2]=2e_5$ \end{tabular} & 4 & 4 \\
\hline
$Z_5^{61}$ & \begin{tabular}{c} $[e_1,e_1]=e_2, [e_1,e_2]=e_3$, $[e_1,e_3]=e_5$, \\ $[e_2,e_1]=2e_3$, $[e_2,e_2]=3e_5$,  \\
$[e_3,e_1]=3e_5$, $[e_4,e_4]=e_5$ \end{tabular} & 3 & 3 \\
\hline
$Z_5^{62}$ & \begin{tabular}{c} $[e_1,e_1]=e_2, [e_1,e_2]=e_3$, $[e_1,e_3]=e_5$, \\ $[e_1,e_4]=e_5$, $[e_2,e_1]=2e_3$,
\\ $[e_2,e_2]=3e_5$, $[e_3,e_1]=3e_5$  \end{tabular} & 3 & 3 \\
\hline
$Z_5^{63}$ & \begin{tabular}{c} $[e_1,e_1]=e_2, [e_1,e_2]=\frac{1}{2}e_3$, $[e_1,e_3]=2e_4$, \\ $[e_1,e_4]=e_5$, $[e_2,e_1]=e_3$, $[e_2,e_2]=3e_4$,
\\  $[e_2,e_3]=8e_5$, $[e_3,e_1]=6e_4$, \\ $[e_3,e_2]=12e_5$, $[e_4,e_1]=4e_5$   \end{tabular} & 3 & 3 \\
\hline
$Z_5^{64}$ & \begin{tabular}{c} $[e_1,e_1]=e_2, [e_1,e_2]=e_4$, \\  $[e_1,e_3]=e_5$, $[e_2,e_1]=2e_4$  \end{tabular} & 4 & 4 \\
\hline
$Z_5^{65}(\alpha)$ & \begin{tabular}{c} $[e_1,e_1]=e_2, [e_1,e_2]=e_4$, \\  $[e_1,e_3]=\alpha e_5$, $[e_2,e_1]=2e_4$, $[e_3,e_1]=e_5$  \end{tabular} & 4 & 4 \\
\hline
$Z_5^{66}$ & \begin{tabular}{c} $[e_1,e_1]=e_2, [e_1,e_2]=e_4$, \\  $[e_1,e_3]=e_5$, $[e_2,e_1]=2e_4$, $[e_3,e_3]=e_5$  \end{tabular} & 3 & 3 \\
\hline
$Z_5^{67}$ & \begin{tabular}{c} $[e_1,e_1]=e_2, [e_1,e_2]=e_4$, \\  $[e_2,e_1]=2e_4$, $[e_3,e_3]=e_5$  \end{tabular} & 3 & 3 \\
\hline
$Z_5^{68}$ & \begin{tabular}{c} $[e_1,e_1]=e_2, [e_1,e_2]=e_4$, \\  $[e_1,e_3]=e_4$, $[e_2,e_1]=2e_4$, $[e_3,e_3]=e_5$  \end{tabular} & 3 & 3 \\
\hline
$Z_5^{69}$ & \begin{tabular}{c} $[e_1,e_1]=e_2, [e_1,e_2]=e_4$, \\  $[e_1,e_3]=e_4+e_5$, $[e_2,e_1]=2e_4$, $[e_3,e_3]=e_5$  \end{tabular} & 3 & 3 \\
\hline
$Z_5^{70}$ & \begin{tabular}{c} $[e_1,e_1]=e_2, [e_1,e_2]=e_4$, \\  $[e_1,e_3]=e_5$, $[e_2,e_1]=2e_4$, $[e_3,e_1]=e_4+2e_5$  \end{tabular} & 4 & 4 \\
\hline
$Z_5^{71}$ & \begin{tabular}{c} $[e_1,e_1]=e_2, [e_1,e_2]=e_4$, \\  $[e_1,e_3]=e_5$, $[e_2,e_1]=2e_4$, $[e_3,e_3]=e_4$  \end{tabular} & 3 & 3 \\
\hline
$Z_5^{72}(\alpha)$ & \begin{tabular}{c} $[e_1,e_1]=e_2, [e_1,e_2]=e_4$, $[e_1,e_3]=\alpha e_5$, \\   $[e_2,e_1]=2e_4$, $[e_3,e_1]=e_5$, $[e_3,e_3]=e_4$  \end{tabular} & 3 & 3 \\
\hline
$Z_5^{73}$ & \begin{tabular}{c} $[e_1,e_1]=e_4, [e_1,e_2]=e_3$, \\  $[e_1,e_3]=e_5$, $[e_2,e_1]=-e_3$  \end{tabular} & 4 & 4 \\
\hline
$Z_5^{74}$ & \begin{tabular}{c} $[e_1,e_2]=e_3+e_4$, $[e_1,e_3]=e_5$, $[e_2,e_1]=-e_3$  \end{tabular} & 4 & 4 \\
\hline
\end{tabular}
\end{center}
\end{table}

\begin{table}[htp] \caption{$5$-dimensional non-trivial and non-split complex Zinbiel algebras with $1$-dimensional annihilator (V).}
\label{alphaybetazinbieldimfive(VII)}
\begin{center}
\begin{tabular}{|c|c|c|c|}
\hline
$Z$ & Products & $\alpha(Z)$ & $\beta(Z)$ \\
\hline
$Z_5^{75}$ & \begin{tabular}{c} $[e_1,e_2]=e_3, [e_1,e_3]=e_5$, \\  $[e_2,e_1]=-e_3$, $[e_2,e_2]=e_4$  \end{tabular} & 3 & 3 \\
\hline
$Z_5^{76}$ & \begin{tabular}{c} $[e_1,e_1]=e_4, [e_1,e_2]=e_3$, \\  $[e_1,e_3]=e_5$, $[e_2,e_1]=-e_3$, $[e_2,e_2]=e_4$  \end{tabular} & 3 & 3 \\
\hline
$Z_5^{77}$ & \begin{tabular}{c} $[e_1,e_2]=e_3, [e_1,e_3]=e_5$, \\  $[e_2,e_1]=-e_3$, $[e_2,e_3]=e_4$  \end{tabular} & 3 & 3 \\
\hline
$Z_5^{78}$ & \begin{tabular}{c} $[e_1,e_1]=e_4, [e_1,e_2]=e_3$, \\  $[e_1,e_3]=e_5$, $[e_2,e_1]=-e_3$, $[e_2,e_3]=e_4$  \end{tabular} & 3 & 3 \\
\hline
$Z_5^{79}$ & \begin{tabular}{c} $[e_1,e_2]=e_3+e_4, [e_1,e_3]=e_5$, \\  $[e_2,e_1]=-e_3$, $[e_2,e_3]=e_4$  \end{tabular} & 3 & 3 \\
\hline
$Z_5^{80}$ & \begin{tabular}{c} $[e_1,e_1]=e_4, [e_1,e_2]=e_3$, $[e_1,e_3]=e_5$ \\  $[e_2,e_1]=-e_3$, $[e_2,e_2]=e_4$, $[e_2,e_3]=e_4$  \end{tabular} & 3 & 3 \\
\hline
$Z_5^{81}$ & \begin{tabular}{c} $[e_1,e_1]=e_4, [e_1,e_2]=e_3$, \\  $[e_1,e_3]=e_5$, $[e_2,e_1]=-e_3$, $[e_2,e_2]=e_5$  \end{tabular} & 3 & 3 \\
\hline
$Z_5^{82}$ & \begin{tabular}{c} $[e_1,e_2]=e_3+e_4, [e_1,e_3]=e_5$, \\  $[e_2,e_1]=-e_3$, $[e_2,e_2]=e_5$  \end{tabular} & 3 & 3 \\
\hline
\end{tabular}
\end{center}
\end{table}

{\bf Acknowledgement} The authors are grateful to Professor Kaygorodov for spotting some typographical errors and for providing reference \cite{AJK}.

\end{document}